\documentclass[12pt, oneside, reqno]{amsart}
\usepackage[top=2.5cm, bottom=2.5cm ,left=3cm, right=3cm]{geometry}

\usepackage{microtype}
\usepackage[
    backend=biber,
    style=alphabetic,
    backref=true,
    url=false,
    doi=false,
    isbn=false
]{biblatex}
\usepackage{mathtools}

\usepackage{iftex}

\usepackage[libertine]{newtxmath}
\iftutex
  \usepackage[no-math]{fontspec}
  \usepackage[nomath]{libertinus}
\else
  \usepackage{libertinus}
\fi

\usepackage{thmtools}
\makeatletter
\@ifundefined{newcounteralias}{}{%
  \renewcommand\thmt@autorefsetup{%
    \@xa\def\csname\thmt@envname autorefname\@xa\endcsname\@xa{\thmt@thmname}}%
}
\makeatother

\usepackage[mathcal]{eucal}
\usepackage{tikz-cd}
\usepackage{enumitem}
\setlist{itemsep=1pt, topsep=4pt}
\usepackage{graphicx}
\usepackage{comment}

\usepackage{xcolor} 
\definecolor{citeblue}{rgb}{0,0.35,0.75}
\usepackage[
    colorlinks=true,
    linkcolor=citeblue,
    citecolor=citeblue,
    urlcolor=citeblue
]{hyperref}
\usepackage[nameinlink,noabbrev]{cleveref}

\numberwithin{equation}{section}

\crefname{equation}{}{}
\Crefname{section}{\S}{}

\theoremstyle{plain}
\newtheorem{theorem}{Theorem}[section]
\newtheorem{lemma}[theorem]{Lemma}
\newtheorem{proposition}[theorem]{Proposition}
\newtheorem{corollary}[theorem]{Corollary}

\theoremstyle{definition}
\newtheorem{definition}[theorem]{Definition}

\newtheorem{remark}[theorem]{Remark}

\DeclareMathOperator{\Mon}{Mon}

\DeclareMathOperator{\divis}{div}
\newcommand{\Z}{\mathbb Z}
\newcommand{\R}{\mathbb R}
\newcommand{\C}{\mathbb C}
\newcommand{\Kah}{\mathcal K}
\newcommand{\Pos}{\mathcal C}
\newcommand{\BK}{\operatorname{BK}}
\newcommand{\FE}{\operatorname{FE}}
\newcommand{\SPE}{\operatorname{SPE}}
\newcommand{\GL}{\operatorname{GL}}
\newcommand{\D}{\operatorname{D}}
\newcommand{\Oth}{\operatorname{O}}

\title{A Strong Global Torelli theorem for OG6 type varieties} 
\author{yun ling}
\date{\today}

\begin{document}

\begin{abstract}
We prove that a hyperk\"ahler manifold of OG6 type is determined, up to isomorphism, by its integral second cohomology endowed with its Hodge structure and Beauville--Bogomolov--Fujiki form. The proof combines the monodromy and chamber theorems of Mongardi--Rapagnetta with the bimeromorphic Torelli theorem. In particular, bimeromorphic OG6 manifolds are biholomorphic.
\end{abstract}
\maketitle

\section{Introduction}

The global Torelli theorem for complex K3 surfaces says that two complex K3 surfaces \(X\) and \(Y\) are isomorphic precisely when there exists an integral Hodge isometry
\[
H^2(X,\mathbb Z)\cong H^2(Y,\mathbb Z)
\]
\cite{PSS,BR}. For higher-dimensional hyperk\"ahler manifolds, the analogous question is considerably more subtle. Birational manifolds need not be isomorphic, even though their second cohomology groups are Hodge isometric. Nor does an integral Hodge isometry, in general, force bimeromorphic equivalence. Namikawa's counterexamples use generalized Kummer fourfolds associated with a complex two-dimensional torus and its dual \cite{Nam}; further counterexamples occur for manifolds of \(K3^{[n]}\)-type when \(n-1\) is not a prime power \cite{MarkmanIntegral}. Thus the second-cohomology Hodge lattice does not, in general, determine either the isomorphism class or the bimeromorphic class.

Verbitsky's global Torelli theorem, together with Huybrechts's work and Markman's Hodge-theoretic formulation, tells us exactly what extra conditions are needed \cite[Theorem~1.3]{Markman}. For two deformation-equivalent irreducible holomorphic symplectic manifolds, bimeromorphicity is equivalent to the existence of a parallel-transport Hodge isometry between their second cohomology groups. When such an isometry sends a K\"ahler class to a K\"ahler class, it is induced by a biholomorphism. Thus the parallel-transport condition and the K\"ahler-cone condition are two separate obstructions to extending the K3 theorem directly.

For OG6 manifolds, Mongardi and Rapagnetta \cite{MR} computed the second-cohomology monodromy group and proved the bimeromorphic global Torelli theorem. In this paper we sharpen their result to a strong global Torelli theorem of K3 type. Our main result is the following

\begin{theorem}\label{thm:main}
Let $X$ and $Y$ be OG6 manifolds. Then $X$ and $Y$ are biholomorphic if and only if there exists an integral Hodge isometry
\[
H^2(X,\mathbb Z)\cong H^2(Y,\mathbb Z).
\]
\end{theorem}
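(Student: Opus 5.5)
The forward direction is immediate: a biholomorphism $f\colon X\to Y$ induces $f^*\colon H^2(Y,\Z)\to H^2(X,\Z)$, which is an integral Hodge isometry for the Beauville--Bogomolov--Fujiki forms. For the converse, we start from an arbitrary integral Hodge isometry $\phi\colon H^2(X,\Z)\to H^2(Y,\Z)$. The plan is to modify $\phi$ by sign and by Hodge isometries of $Y$ until it is a parallel-transport operator sending a K\"ahler class to a K\"ahler class. Verbitsky's theorem in Markman's form \cite[Theorem~1.3]{Markman} then shows that $\phi$ is induced by a biholomorphism.

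\emph{Step 1: reduce to parallel transport.} By Mongardi--Rapagnetta \cite{MR}, the monodromy group of an OG6 manifold is the group $\Oth^+(H^2(X,\Z))$ of orientation-preserving isometries, where orientation refers to the positive cone. The same holds for parallel-transport operators between two OG6 manifolds: for a fixed OG6 manifold $X_0$, any isometry $H^2(X,\Z)\to H^2(Y,\Z)$ that preserves the orientation of the positive three-spaces is a parallel transport. This is because we can compose with parallel transports along paths to $X_0$ and use that $\Mon^2(X_0)=\Oth^+$. Since $-\mathrm{id}$ is a Hodge isometry that reverses the orientation of positive three-spaces (the rank $3$ is odd), either $\phi$ or $-\phi$ is a parallel-transport Hodge isometry. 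Replacing $\phi$ by $-\phi$ if necessary, $\phi$ maps the positive cone $\Pos_X$ onto $\Pos_Y$, and $X$, $Y$ are bimeromorphic.

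\emph{Step 2: fix the K\"ahler chamber.} Let $\Kah_X\subset\Pos_X$ be the K\"ahler cone. By Mongardi--Rapagnetta's chamber theorem, the K\"ahler-type chambers of $\Pos_X$ are the connected components of $\Pos_X$ minus the hyperplanes $\alpha^\perp$, for $\alpha$ ranging over an explicit set of wall divisors in $H^{1,1}(X,\Z)$. These are the classes of square $-2$, together with classes of square $-4$ and a prescribed divisibility. The group $W_{\mathrm{Exc}}$ generated by reflections in the stably prime exceptional classes acts simply transitively on the exceptional chambers. This leaves the chambers inside a fundamental exceptional chamber, which differ by flops, that is, by bimeromorphic models. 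The claim to establish is that for OG6 the K\"ahler-type chambers are exactly the exceptional chambers. In other words, every wall divisor is, up to a positive multiple, a prime exceptional class whose reflection is an integral Hodge isometry in $\Mon^2_{\mathrm{Hdg}}$. Given this, let $W=W_{\mathrm{Exc}}(Y)$, generated by reflections $R_\alpha$ with $\alpha$ running over the wall divisors of $Y$. Then $W$ acts transitively on the chambers of $\Pos_Y$, so some $w\in W$ satisfies $w(\phi(\Kah_X))=\Kah_Y$. The map $w\circ\phi$ is still a parallel-transport Hodge isometry, since each reflection lies in $\Oth^+$ and preserves $H^{1,1}$, and it maps a K\"ahler class to a K\"ahler class. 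Markman's Hodge-theoretic Torelli theorem then gives a biholomorphism $f$ with $f_*=w\circ\phi$. As a corollary, bimeromorphic OG6 manifolds are biholomorphic.

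\emph{Main obstacle.} The key step is the second one: showing that every wall divisor of an OG6 manifold defines a reflection that is an integral isometry. For a class $\alpha$ with $(\alpha,\alpha)=-2$, the reflection $x\mapsto x+(x,\alpha)\alpha$ is clearly integral. For the classes with $(\alpha,\alpha)=-4$, integrality of $x\mapsto x+\tfrac{(x,\alpha)}{2}\alpha$ requires $\divis(\alpha)=2$. I would check against the explicit list in \cite{MR} that the square $-4$ walls are exactly those of divisibility $2$, and that no other wall types occur. For example, there should be no flopping walls of a different square, which would yield chambers not related by reflections. Both checks use the lattice $U^{3}\oplus\langle-2\rangle^{2}$. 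It also has to be checked that these reflections lie in $\Oth^+$, which holds since reflections in negative vectors preserve the positive cone. If a non-reflective wall appeared, the argument would fail, so this verification is the heart of the proof.
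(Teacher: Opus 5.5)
Your Step 2 rests on a claim that is false for OG6. The K\"ahler-type chambers are \emph{not} the exceptional chambers, and not every wall divisor has a prime exceptional multiple. In Mongardi--Rapagnetta's classification the primitive wall divisors are the classes with $(q(\delta),\divis(\delta))\in\{(-4,2),(-2,2),(-2,1)\}$, and only the first two types are stably prime exceptional. The $(-2,1)$ classes are genuine flopping walls. For example, if $H^{1,1}(X,\Z)=\Z\delta$ with $\delta$ of type $(-2,1)$, then $\FE_X=\Pos_X$, but $\delta^\perp$ cuts $\Pos_X$ into two K\"ahler-type chambers. Your claim would force $\Kah_X=\FE_X$, i.e.\ every bimeromorphic map between OG6 manifolds would be biregular, and this example rules that out. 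Consequently your group $W$, generated by reflections in \emph{all} wall divisors, is not $W_{\mathrm{Exc}}$, and you cannot get its transitivity from Markman's theorem that $W_{\mathrm{Exc}}$ acts simply transitively on exceptional chambers. That theorem only moves $\phi(\Kah_X)$ into $\FE_Y$. What remains --- distinct chambers inside $\FE_Y$ separated by $(-2,1)$ walls, i.e.\ distinct bimeromorphic models --- is exactly the content of the theorem. Your ``main obstacle'' paragraph also conflates ``flopping'' with ``non-reflective'': flopping walls do occur on OG6, they just happen to be reflective.

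The repair uses only what you already check at the end.
\begin{itemize}
\item A primitive $\delta$ of type $(-2,1)$ has the integral reflection $x\mapsto x+q(x,\delta)\delta$.
\item This reflection fixes $H^{2,0}$ and a positive three-plane, so it lies in $\Oth^+=\Mon^2$, hence in $\Mon^2_{\mathrm{Hdg}}$.
\item As a Hodge monodromy operator it preserves the set of wall divisors, and so it swaps the two chambers adjacent along $\delta^\perp$.
\item Any two chambers of the locally finite arrangement are joined by a finite gallery of adjacent chambers. Hence the group generated by reflections in all wall divisors, flopping ones included, is transitive on K\"ahler-type chambers.
\end{itemize}
With this, your final appeal to Markman's Torelli theorem goes through. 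This is precisely the paper's argument. After obtaining a bimeromorphic $f\colon X\dashrightarrow Y$, both $\Kah_X$ and $f^*\Kah_Y$ lie in $\FE_X$. The walls crossed between them are exactly the non-SPE $(-2,1)$ walls, and their reflections lie in $\Mon^2_{\mathrm{Bir}}(X)$.
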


Thus, just as for K3 surfaces, the integral second-cohomology Hodge lattice determines the biholomorphism class of an OG6 manifold. We call this a strong global Torelli theorem, since it classifies such manifolds up to biholomorphism.

The proof hinges on the chamber decomposition of Mongardi--Rapagnetta \cite{MR}. Every wall is orthogonal to a primitive class of square \(-2\), or of square \(-4\) and divisibility \(2\). In either case, reflection in the wall is an integral monodromy operator. After finitely many such reflections, the relevant K\"ahler chambers line up, and the K\"ahler-class criterion in the global Torelli theorem gives the required biholomorphism. The same argument applies to nonprojective OG6 manifolds.

Combining Theorem~\ref{thm:main} with the bimeromorphic global Torelli theorem gives the following consequence.

\begin{corollary}\label{cor:bimeromorphic}
Bimeromorphic OG6 manifolds are biholomorphic. In particular, birational projective OG6 manifolds are isomorphic.
\end{corollary}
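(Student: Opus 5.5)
The corollary follows from Theorem~\ref{thm:main} once we know that a bimeromorphic map between hyperk\"ahler manifolds induces an integral Hodge isometry on second cohomology. So the plan is to construct such an isometry from a bimeromorphic map and then apply the theorem.

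Let \(f\colon X\dashrightarrow Y\) be a bimeromorphic map between OG6 manifolds. Both manifolds carry a holomorphic symplectic form and so have trivial canonical bundle. By the standard argument of Huybrechts, \(f\) is therefore an isomorphism outside closed analytic subsets of codimension at least two in \(X\) and in \(Y\). Removing subsets of complex codimension \(\ge 2\), i.e.\ real codimension \(\ge 4\), does not change \(H^2\). Hence pushforward along the graph of \(f\) gives an isomorphism
\[
f_*\colon H^2(X,\Z)\to H^2(Y,\Z).
\]
This isomorphism has three properties.
\begin{itemize}
\item It is integral, since it is a restriction isomorphism on the open sets where \(f\) is defined.
\item It respects Hodge structures, since it carries the symplectic form \(\sigma_X\) to a multiple of \(\sigma_Y\).
\item It preserves the Beauville--Bogomolov--Fujiki form. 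The BBF form is computed from \(\int \alpha^2(\sigma\bar\sigma)^{n-1}\) and \(\int(\sigma\bar\sigma)^n\), and these integrals may be evaluated on the common open set, whose complement has measure zero.
\end{itemize}
This is exactly Huybrechts's result that birational hyperk\"ahler manifolds have Hodge isometric \(H^2\). We may also simply quote the bimeromorphic global Torelli theorem \cite[Theorem~1.3]{Markman}, which gives a parallel-transport Hodge isometry \(H^2(X,\Z)\cong H^2(Y,\Z)\) directly.

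Applying Theorem~\ref{thm:main} to this integral Hodge isometry shows that \(X\) and \(Y\) are biholomorphic.

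For the projective statement, a birational map between projective OG6 manifolds is in particular bimeromorphic, so the two manifolds are biholomorphic. By GAGA, a biholomorphism between projective varieties is an algebraic isomorphism.

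No step here is a real obstacle; all the substance lies in Theorem~\ref{thm:main}. The one point to check is that the codimension-two extension argument applies in the nonprojective Kähler setting. Huybrechts's argument is stated for compact hyperk\"ahler manifolds, so it does.
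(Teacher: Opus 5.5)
Your proposal is correct and follows the paper's route. The paper obtains an integral Hodge isometry $H^2(X,\Z)\cong H^2(Y,\Z)$ from the bimeromorphic Torelli theorem \cite[Theorem~1.3]{Markman}, which you also invoke. Theorem~\ref{thm:main} then upgrades bimeromorphic to biholomorphic, and GAGA gives the projective statement.

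One justification is loose: your argument for compatibility with the BBF form. On the common open set, representing forms agree only up to exact forms, and that set is noncompact. So the comparison needs the graph and projection-formula argument from Huybrechts's lemma, not a measure-zero remark. Since you cite that lemma and Markman's theorem, this does not affect the proof.
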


This also has an immediate consequence for derived categories. The \(D\)-equivalence conjecture of Bondal--Orlov and Kawamata predicts that birational smooth projective Calabi--Yau varieties have equivalent bounded derived categories of coherent sheaves \cite{BO,Kaw}. For hyperk\"ahler varieties, the conjecture has recently been proved for \(K3^{[n]}\)-type by Maulik--Shen--Yin--Zhang \cite{MSYZ}, and for OG10 type by Hartlieb--Shah \cite{HS}. The following corollary settles the OG6 case directly.

\begin{corollary}\label{cor:derived}
Let \(X\) and \(Y\) be birational projective OG6 manifolds. Then
\[
\D^b(\operatorname{Coh}(X))
\simeq
\D^b(\operatorname{Coh}(Y)).
\]
In particular, the \(D\)-equivalence conjecture holds for projective hyperk\"ahler manifolds of OG6 type.
\end{corollary}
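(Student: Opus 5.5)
The plan is to reduce Corollary~\ref{cor:derived} directly to Corollary~\ref{cor:bimeromorphic}, so that no derived-category machinery (Fourier--Mukai kernels, flops, moduli of complexes) is needed. Let \(X\) and \(Y\) be birational projective OG6 manifolds, and fix a birational map \(f\colon X\dashrightarrow Y\).

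\emph{Birational implies bimeromorphic.} Pass to the associated compact complex manifolds via GAGA. The closure of the graph of \(f\) is a projective subvariety \(\Gamma\subset X\times Y\). Both projections \(\Gamma\to X\) and \(\Gamma\to Y\) are proper, surjective and generically one-to-one, so \(\Gamma\) realises \(f\) as a bimeromorphic map of the underlying complex manifolds.

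\emph{From bimeromorphic to biholomorphic.} Corollary~\ref{cor:bimeromorphic} now gives a biholomorphism \(\varphi\colon X^{\mathrm{an}}\xrightarrow{\ \sim\ } Y^{\mathrm{an}}\). Since \(X\) and \(Y\) are projective, GAGA (equivalently, Chow's theorem applied to the graph of \(\varphi\), which is a closed analytic subset of \(X\times Y\)) shows that \(\varphi\) is algebraic. Hence \(\varphi\) is an isomorphism of projective varieties \(X\cong Y\).

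\emph{Derived equivalence.} An isomorphism of schemes induces an equivalence \(\varphi_*\colon \operatorname{Coh}(X)\to\operatorname{Coh}(Y)\) with quasi-inverse \(\varphi^*\). Both functors are exact, so they pass to an equivalence
\[
\D^b(\operatorname{Coh}(X))\simeq \D^b(\operatorname{Coh}(Y)).
\]
This equivalence is the Fourier--Mukai transform whose kernel is the structure sheaf of the graph of \(\varphi\). This is exactly the prediction of the \(D\)-equivalence conjecture in the OG6 case.

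All of the substance therefore sits in Corollary~\ref{cor:bimeromorphic}, which in turn comes from Theorem~\ref{thm:main}. Once that is granted, the only point needing care is the passage between the algebraic and analytic categories, which is routine via GAGA.
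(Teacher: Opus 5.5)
Your proposal is correct and follows the paper's route. The paper gives no separate argument for this corollary because it follows at once from Corollary~\ref{cor:bimeromorphic} (birational projective OG6 manifolds are isomorphic), after which the derived equivalence is the trivial one induced by the isomorphism; your GAGA/Chow remarks just make explicit the algebraic-analytic passage that the paper leaves implicit.
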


Along with the results for \(K3^{[n]}\)-type and OG10 type, this leaves generalized Kummer type as the only remaining case among the currently known deformation types of irreducible holomorphic symplectic manifolds where the conjecture has not yet been fully established.

\section*{Acknowledgements}

The author is grateful to his advisor, Zhiyuan Li, for suggesting this problem and writing guidance, and to Ruxuan Zhang, Zaiyuan Chen and Ziwei Lu for helpful discussions. The author is supported by the NSFC grants (No.~12425105), the Shanghai Pilot Program for Basic Research (No.~21TQ00) and LMNS.

\medskip
\noindent
\textbf{AI Disclosure.}
During the preparation of this manuscript, the author used AI-based tools as auxiliary aids. In the course of the literature review, GPT-5.6 helped bring Lemma 3.4 to the author’s attention. The author directed the mathematical work, independently verified all arguments and results, and wrote and approved the final version. The author takes full responsibility for the content of the manuscript.

\section{Preliminaries}

	A \emph{hyperk\"ahler manifold} is a simply connected compact K\"ahler manifold $X$ with $H^0(X,\Omega_X^2) = \C \sigma_X$, where $\sigma_X$ is an everywhere non-degenerate holomorphic 2-form on $X$.
	Its second cohomology $H^2(X,\Z)$ is a lattice carrying a canonical integral symmetric bilinear form $q$ of signature $(3,b_2(X)-3)$, called \emph{Beauville--Bogomolov--Fujiki (BBF) form}. It is normalized to be positive on K\"ahler classes \cite{Markman}. We write $q(x)=q(x,x)$ and $\Lambda=H^2(X,\Z)$, equipped with this form.
	
	Let $\Oth^+(\Lambda)$ be the subgroup preserving each of the two connected components of the space of oriented maximal positive definite subspaces of $\Lambda_{\R}$. And let $\Pos_X$ be the component of $\{x\in H^{1,1}(X,\R):q(x)>0\}$ containing the K\"ahler cone $\Kah_X$. For $0\ne\delta\in \Lambda$, by divisibility we mean the positive generator of the subgroup \(q(\delta,\Lambda)\), i.e.,
	\[
	\divis(\delta)\mathbb{Z}=q(\delta,\Lambda).
	\]

    For higher-dimensional hyperk\"ahler manifolds, the global Torelli theorem requires the concept of parallel transport in addition to just integral Hodge isometry.
	\begin{definition}[{\cite[Definition~1.1]{Markman}}]
		Let $X$ and $Y$ be two hyperk\"ahler manifolds.
		\begin{enumerate}
			\item An isomorphism $\varphi: H^k(X,\Z) \to H^k(Y,\Z)$ is a \emph{parallel transport operator} if there is a smooth proper family $\pi: \mathcal X \to B$ of hyperk\"ahler manifolds over an analytic base $B$, points $b_0,b_1 \in B$, isomorphisms $\psi_0:X \xrightarrow{\sim} \mathcal X_{b_0}$ and $\psi_1:Y \xrightarrow{\sim} \mathcal X_{b_1}$, and a continuous path $\gamma:[0,1] \to B$ from $b_0$ to $b_1$, such that parallel transport in the local system $R^k\pi_*\Z$ along $\gamma$ induces $\psi_{1*} \circ \varphi \circ \psi_0^* : H^k(\mathcal X_{b_0},\Z) \xrightarrow{\sim} H^k(\mathcal X_{b_1},\Z)$.
			\item A \emph{monodromy operator} on $H^k(X,\Z)$ is a parallel transport induced by a loop $\gamma$.
			\item The \emph{monodromy group} $\Mon^k(X)$ is the subgroup of $\GL(H^k(X,\Z))$ generated by monodromy operators.
		\end{enumerate}
	\end{definition}

  We have  Markman's global Torelli theorem for hyperk\"ahler manifolds.
    \begin{theorem}[{\cite[Theorem~1.3]{Markman}}]\label{torelli}
        Let $X$ and $Y$ be two deformation equivalent hyperk\"ahler manifolds.
        \begin{enumerate}
            \item $X$ and $Y$ are bimeromorphic, if and only if there exists a parallel transport operator $\varphi: H^2(X,\Z) \xrightarrow{\sim} H^2(Y,\Z)$, which is a integral Hodge isometry.
            \item Let $\varphi$ be the preceding parallel transport integral Hodge isometry, then there exists an isomorphism $f: Y \to X$ such that $\varphi = f^*$, if and only if $\varphi(\Kah_X) \cap \Kah_Y \ne \varnothing$.
        \end{enumerate}
    \end{theorem}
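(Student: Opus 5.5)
The plan is to deduce both statements from the period map on the moduli space of marked hyperk\"ahler manifolds, combining Verbitsky's global Torelli theorem with Huybrechts's analysis of non-separated points. Fix the lattice \(\Lambda\) and let \(\mathfrak M_\Lambda\) be the (non-Hausdorff) moduli space of marked pairs \((Z,\eta)\), where \(\eta:H^2(Z,\Z)\to\Lambda\) is an isometry. Let \(P:\mathfrak M_\Lambda\to\Omega_\Lambda\) be the period map to the period domain. Two marked pairs \((Z,\eta)\) and \((Z',\eta')\) lie in the same connected component \(\mathfrak M^0_\Lambda\) exactly when \(\eta'^{-1}\circ\eta\) is a parallel transport operator; this is a formal consequence of the definition, using the families over the base of the universal deformation. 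I will take the following two inputs as the deep external facts.
\begin{enumerate}
\item \emph{Verbitsky:} \(P|_{\mathfrak M^0_\Lambda}\) is surjective, and two points of \(\mathfrak M^0_\Lambda\) with the same period are inseparable.
\item \emph{Huybrechts:} inseparable points \((Z,\eta)\), \((Z',\eta')\) give bimeromorphic \(Z,Z'\). More precisely, there is an effective cycle \(\Gamma=\Gamma_0+\sum_i\Gamma_i\) in \(Z\times Z'\) with \([\Gamma]_*=\eta'^{-1}\circ\eta\) on \(H^2\). Here \(\Gamma_0\) is the graph of a bimeromorphic map, and the \(\Gamma_i\) have images in \(Z\) and in \(Z'\) of positive codimension.
\end{enumerate}

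For part (1), suppose first that \(\varphi\) is a parallel transport Hodge isometry. Choose a marking \(\eta\) of \(X\) and set \(\eta_Y=\eta\circ\varphi^{-1}\). Then \((X,\eta)\) and \((Y,\eta_Y)\) lie in the same component and, since \(\varphi\) is Hodge, have the same period. By Verbitsky they are inseparable, and by Huybrechts \(X\) and \(Y\) are bimeromorphic. Conversely, suppose \(X\) and \(Y\) are bimeromorphic. Huybrechts's argument produces one-parameter deformations \(\mathcal X\to S\), \(\mathcal Y\to S\) that are isomorphic over \(S\setminus\{0\}\). Transporting a marking through these families gives inseparable marked pairs in one component, whose periods must agree since \(\Omega_\Lambda\) is Hausdorff. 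The induced \([\Gamma]_*\) is then a parallel transport operator, and it is a Hodge isometry.

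For part (2), if \(\varphi=f^*\) for a biholomorphism \(f\), then \(f^*\) carries \(\Kah_X\) onto \(\Kah_Y\), so the stated intersection is nonempty. For the converse, take the cycle \(\Gamma\) from Huybrechts with \([\Gamma]_*=\varphi\). Pick a K\"ahler class \(\alpha\in\Kah_X\) with \(\varphi(\alpha)\in\Kah_Y\), and compare top intersections against \(\alpha\) and \(\varphi(\alpha)\), using the BBF/Fujiki relation. The components \(\Gamma_i\) with degenerate projections contribute classes that pair non-positively against one K\"ahler class while \(\varphi\) preserves positivity on the other side. This should force \(\Gamma=\Gamma_0\) and show that the bimeromorphic map \(\Gamma_0\) sends a K\"ahler class to a K\"ahler class. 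By Fujiki's lemma, a bimeromorphic map between compact K\"ahler manifolds with this property is a biholomorphism \(f\), and \(f^*=\varphi\).

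The main obstacle is the first input, Verbitsky's theorem that the period map identifies non-separated points, i.e.\ it becomes injective on the Hausdorff reduction of \(\mathfrak M^0_\Lambda\). This requires the ergodicity and twistor-line connectivity arguments and cannot be reproduced briefly, so in this paper I would cite it together with Huybrechts's results rather than reprove it. A secondary subtle point is the positivity argument in part (2) eliminating the extra components \(\Gamma_i\). I would handle it by choosing \(\alpha\) very general, so that \(\varphi(\alpha)\) is orthogonal to no class of a \(\Gamma_i\)-image divisor.
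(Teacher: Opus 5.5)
The paper gives no proof of this statement. It is quoted from Markman's survey, where it is deduced from Verbitsky's theorem and Huybrechts's results on inseparable marked pairs, essentially along the lines you sketch. Your part (1) and the easy half of (2) are fine at that level of citation.

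The gap is in the converse of (2), and two things are off there. First, no argument with classes in $H^2$ can force $\Gamma=\Gamma_0$. A component $\Gamma_i$ whose images in both $X$ and $Y$ have codimension $\ge 2$ acts by zero on $H^2$, so it is invisible to such arguments. What you need, and all you can get, is $(\Gamma_0)_*=\varphi$ on $H^2$. Second, choosing $\alpha$ very general does nothing toward that; genericity is not the mechanism.

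The missing computation is as follows. If $p_Y(\Gamma_i)$ has codimension $\ge 2$, then $[\Gamma_i]_*\alpha=0$. If it is a prime divisor $D_i'$, then $[\Gamma_i]_*\alpha=c_i[D_i']$ with $c_i>0$, namely the integral of $\alpha$ over the curve in $X$ given by a general fibre of $\Gamma_i\to D_i'$. Let $g\colon X\dashrightarrow Y$ be the map whose graph is $\Gamma_0$. Then $\varphi(\alpha)=g_*\alpha+E'$, where $E'=\sum c_i[D_i']$ is effective. Since $\varphi$ and $g_*$ are isometries, $q(\varphi(\alpha))=q(\alpha)=q(g_*\alpha)$, hence $2q(g_*\alpha,E')+q(E')=0$ and
\[
q(\varphi(\alpha),E')=-q(g_*\alpha,E')=-q(\alpha,g^*E')\le 0 .
\]
The inequality holds for two reasons. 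Because $g$ is an isomorphism in codimension one, $g^*E'$ is the class of an effective divisor (the strict transform). And a K\"ahler class pairs positively with nonzero effective divisors, by the Fujiki relation $\int_X\alpha^{2n-1}\beta=c_X\,q(\alpha)^{n-1}q(\alpha,\beta)$.

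Since $\varphi(\alpha)$ is K\"ahler, the left side is strictly positive unless $E'=0$. Hence $E'=0$ and $g_*\alpha=\varphi(\alpha)$ is K\"ahler. Fujiki's lemma then makes $g$ biholomorphic, and $f=g^{-1}$ satisfies $f^*=g_*=\varphi$.

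Equivalently, decompose the transpose correspondence to get $\alpha=g^*\varphi(\alpha)+E$ with $E$ effective, and deduce $g_*E+E'=0$. Either way, what makes the argument work is positivity of K\"ahler classes on strict transforms of divisors, not a general choice of $\alpha$.
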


 The BBF form $q$ is deformation invariant, thus $\Mon^2(X) \subset \Oth(\Lambda)$. We denote by $\Mon^2_{\mathrm{Hdg}}(X)$  the subgroup preserving the Hodge structure and let  $\Mon^2_{\mathrm{Bir}}(X)$ be the subgroup induced by self-bimeromorphic maps of $X$. By \cite[\S4-5]{Markman}, they satisfy the following inclusions
	\[
		\Mon^2_{\mathrm{Bir}}(X) \subset \Mon^2_{\mathrm{Hdg}}(X) \subset \Mon^2(X) \subset \Oth^+\bigl(H^2(X,\Z)\bigr).
	\]

    The birational geometry of hyperk\"ahler is governed by some specific wall and chamber decomposition of $\Pos_X$. Such walls are defined as hyperplane
	\[
		\delta^\perp:=\{x \in \Lambda_\R \mid q(x,\delta) = 0\},
	\]
	where $\delta \in \Lambda$ is some specific negative class.
	Recall the birational K\"ahler cone is defined as
	\[
	\BK_X := \bigcup_{f:X\dashrightarrow Y}f^*\Kah_Y,
	\]
	where $f$ ranges over bimeromorphic maps to hyperk\"ahler manifolds. This cone need not be connected.

	\begin{definition}
		\begin{enumerate}
			\item {\cite[Definition~1.2]{Mor}}. Let $\delta$ be a primitive integral $(1,1)$-class.It is a \emph{wall divisor} or \emph{wall class}\footnote{Wall classes are also called \emph{monodromy birationally minimal (MBM)} classes if we adopt the definition of \cite[Definition~1.13]{AV} and ignore the requirement of integral primitivity.} if
			\[
			q(\delta)<0,\qquad
			(u\delta)^\perp\cap\BK_X=\varnothing
			\quad\text{for every }u\in\Mon^2_{\mathrm{Hdg}}(X).
			\]
			\item {\cite[Definitions~5.1 and 6.4]{Markman}}. A \emph{prime exceptional divisor} is a reduced irreducible effective divisor $E$ with $q([E])<0$.
			\item A line bundle $L$ is \emph{stably-prime exceptional (SPE)} if, outside a proper closed analytic subset of the deformation space of the pair, its linear system consists of a prime exceptional divisor.
		\end{enumerate}
	\end{definition}
	
	Both prime exceptional divisors and stably-prime exceptional line bundle are wall divisors, up to some nonzero integral multiples, but the reverse is not true.  Let $\Delta_X$ be the set of primitive wall divisors, $\SPE_X \subset \Delta_X$ be the set of primitive wall divisors that admit a nonzero integral multiple represented by an SPE line bundle. They produce two wall and chamber decompositions, one coarser and one finer.
    
    Before that, let us introduce the \emph{fundamental exceptional chamber}, which is defined as
	\[
	\FE_X=\{x\in\Pos_X \mid q(x,[E])>0
	\text{ for every prime exceptional divisor }E\}.
	\]
    The following result is due to Markman and Amerik-Verbitsky.
    \begin{theorem}\label{thm:general-chambers}
		Let $X$ be a hyperk\"ahler manifold, not necessarily projective.
		\begin{enumerate}
			\item {\cite[Theorem~6.17]{Markman} and \cite[Theorem~3.24]{AV}}. The fundamental exceptional chamber  $\FE_X$ is the connected component containing a K\"ahler class of
			\begin{equation}\label{eq:exceptional-chambers}
				\Pos_X\setminus
				\bigcup_{\delta \in \SPE_X}
				\delta^\perp.
			\end{equation}
			\item {\cite[Theorem~1.19]{AV}} The K\"ahler cone $\Kah_X$ is the connected component containing a K\"ahler class of
			\begin{equation}\label{eq:kahlertype-chambers}
				\Pos_X\setminus\bigcup_{\delta\in\Delta_X}\delta^\perp.
			\end{equation}
		\end{enumerate}
	\end{theorem}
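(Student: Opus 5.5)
The statement collects two results already in the literature. The plan is to deduce both parts from one convexity principle. Suppose $C\subset\Pos_X$ is a nonempty open convex cone that meets no hyperplane $\delta^\perp$ for $\delta$ in a set $S$. Suppose also that the hyperplanes $\{\delta^\perp\}_{\delta\in S}$ are locally finite in $\Pos_X$, and that every boundary point of $C$ lying in $\Pos_X$ lies on some $\delta^\perp$ with $\delta\in S$. Then $C$ is a connected component of $\Pos_X\setminus\bigcup_{\delta\in S}\delta^\perp$. To see this, let $C'$ be the component containing $C$. If $C'\ne C$, then by connectedness $C'$ contains a point of $\partial C\cap\Pos_X$. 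That point lies on some $\delta^\perp$, which is excluded from $C'$, a contradiction. So for each part I would verify three things: disjointness, the boundary property, and local finiteness.

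\textbf{Part (2).} Disjointness is immediate from the definition of $\Delta_X$: take $u=\mathrm{id}$ and use $\Kah_X\subset\BK_X$. For the boundary property I would use Boucksom's description of the K\"ahler cone. A class $x\in\Pos_X$ is K\"ahler if and only if $q(x,\cdot)$ is positive on the classes, under the BBF duality, of all rational curves on $X$. A boundary point $x$ of $\Kah_X$ in $\Pos_X$ then pairs to zero with the class of an extremal rational curve $C$. Let $\delta$ be the primitive class proportional to the dual of $[C]$. Because $x\in\Pos_X$ and $q(x,\delta)=0$, $\delta$ is orthogonal to a positive class in the $(1,1)$ signature-$(1,\ast)$ part, so $q(\delta)<0$. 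The substantive input, due to Amerik--Verbitsky, is that such curve classes are MBM. Their deformation argument shows that an extremal rational curve with negative dual class deforms along the whole Hodge locus where $\delta$ stays of type $(1,1)$. Consequently no $\Mon^2_{\mathrm{Hdg}}$-translate of $\delta^\perp$ can meet a birational K\"ahler cone, i.e.\ $\delta\in\Delta_X$.

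\textbf{Part (1).} Here $S=\SPE_X$ and the cone is $\FE_X$. It is convex and open, being cut out by the strict inequalities $q(x,[E])>0$, provided the family of hyperplanes $[E]^\perp$ is locally finite. It contains $\Kah_X$, since K\"ahler classes pair positively with effective divisors. The key input from Markman is the relation between SPE classes and prime exceptional divisors. On the one hand, the class of each prime exceptional divisor $E$ is a multiple of an element of $\SPE_X$. This holds because the reflection in $[E]$ is a monodromy operator and $E$ survives on generic deformations of the pair. On the other hand, for $\delta\in\SPE_X$, some nonzero multiple of $\delta$ or of $-\delta$ is the class of a prime exceptional divisor on $X$ itself. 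With these two facts, the hyperplanes $\delta^\perp$ for $\delta\in\SPE_X$ are exactly the hyperplanes $[E]^\perp$. Disjointness then follows because $\FE_X$ lies strictly on the positive side of each $[E]^\perp$. The boundary property follows from local finiteness, since a boundary point of a locally finite intersection of open half-spaces lies on one of the bounding hyperplanes.

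The main obstacle is local finiteness of the walls in $\Pos_X$, which both parts need. Without it, a boundary point could be a limit of walls without lying on any single one, and the convexity principle would fail. For part (2), I would first invoke Amerik--Verbitsky's theorem that $-q$ is bounded on primitive MBM classes. Given that bound, fix a compact set $K\subset\Pos_X$. The classes $\delta$ with $q(\delta)\ge -N$ whose hyperplane $\delta^\perp$ meets $K$ lie in a bounded region of $\Lambda_\R$: the form restricted to $\langle x\rangle\oplus x^\perp$, with $x\in K$, is negative definite on $x^\perp$. Only finitely many integral points lie in a bounded region, so only finitely many walls meet $K$. For part (1), the same argument applies to prime exceptional classes, using Markman's boundedness of $q([E])$ for a fixed deformation type, or, more simply, by running the same argument with SPE classes inside the MBM set.
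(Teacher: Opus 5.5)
The paper does not reprove this statement; it quotes Markman and Amerik--Verbitsky. Your part (2) is a reasonable sketch of the Boucksom/Amerik--Verbitsky/Mongardi argument. The genuine gap is in part (1), in your second key input. It is \emph{not} true that for every $\delta\in\SPE_X$ some nonzero multiple of $\delta$ or $-\delta$ is the class of a prime exceptional divisor on $X$ itself. So the walls $\delta^\perp$, $\delta\in\SPE_X$, are not just the walls $[E]^\perp$.

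Already on a K3 surface this fails. Let $X$ contain smooth rational curves $C_1,C_2$ with $C_1\cdot C_2=1$, and set $\delta=C_1+C_2=s_{C_1}(C_2)$. This is a primitive $(-2)$-class, and it is stably prime exceptional: on a generic deformation of $(X,\mathcal O_X(\delta))$ it is an irreducible $(-2)$-curve. Yet $|k\delta|=\{kC_1+kC_2\}$ for $k>0$, and $-k\delta$ is never effective, so no multiple of $\pm\delta$ is prime. In general $\SPE_X$ is $\Mon^2_{\mathrm{Hdg}}(X)$-stable, whereas the set of prime exceptional classes on $X$ is not. Consequently your sentence ``disjointness follows because $\FE_X$ lies strictly on the positive side of each $[E]^\perp$'' says nothing about walls such as $(C_1+C_2)^\perp$. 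Showing that these extra SPE walls miss $\FE_X$ is exactly the nontrivial half of part (1).

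The step can be repaired within your framework:
\begin{itemize}
\item Let $L$ be an SPE line bundle with $c_1(L)=k\delta$. Upper semicontinuity of $h^0$ over the deformation space of $(X,L)$ gives $h^0(X,L)\ge1$. So $k\delta=[D]$ for a nonzero effective divisor $D$.
\item Every prime component of $D$ pairs positively with every $x\in\FE_X$. For exceptional components this is the definition of $\FE_X$.
\item A non-exceptional prime divisor $F$ has $q(F)\ge0$ and $q(\omega,F)>0$ for a K\"ahler class $\omega$. Hence $[F]\in\overline{\Pos_X}\setminus\{0\}$, and so $q(x,F)>0$.
\end{itemize}
Thus $q(x,\delta)\ne0$ on $\FE_X$, which is the missing disjointness.

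Two smaller remarks on local finiteness:
\begin{itemize}
\item Part (2) does not need it, since Boucksom's criterion already places every boundary point of $\Kah_X$ on a wall. Also, Amerik--Verbitsky's boundedness of MBM squares is proved under $b_2\ge5$, a hypothesis the statement does not make.
\item Part (1) only needs local finiteness of the walls $[E]^\perp$, and this holds unconditionally. Distinct prime divisors satisfy $q(E,E')\ge0$. If infinitely many such walls met a compact subset of $\Pos_X$, the normalized classes $[E_j]/\sqrt{-q(E_j)}$ would accumulate at a vector of square $-1$. That is incompatible with $q(E_i,E_j)\ge0$ for $i\ne j$.
\end{itemize}
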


    The connected components of \eqref{eq:exceptional-chambers} are called \emph{exceptional chambers}, and those of \eqref{eq:kahlertype-chambers} are called \emph{K\"ahler-type} or \emph{K\"ahler-Weyl chambers}. Since $\SPE_X\subset\Delta_X$, the latter decomposition refines the former.
	
	Finally, we collect some results from \cite{Markman} and \cite{AV}, to understand the action of $\Mon_{\mathrm{Hdg}}^2(X)$ on these chamber decompositions.
	\begin{lemma}\label{lem:general-chambers}
		Let $X$ be a hyperk\"ahler manifold.
		\begin{enumerate}
			\item The group $\Mon^2_{\mathrm{Hdg}}(X)$ preserves $\Delta_X$ and $\SPE_X$, and hence permutes both chamber decompositions. The exceptional chambers are precisely $u(\FE_X)$, for $u\in\Mon^2_{\mathrm{Hdg}}(X)$.
		
			\item The K\"ahler-type chambers are precisely $u(f^*\Kah_Y)$, where $u\in\Mon^2_{\mathrm{Hdg}}(X)$ and $f:X\dashrightarrow Y$ is bimeromorphic. Those chambers contained in $\FE_X$ are exactly the cones $f^*\Kah_Y$. In particular,
			\[
			      \BK_X\subset\FE_X\subset\overline{\BK_X},
                \qquad
				\BK_X=\FE_X\setminus \bigcup_{\delta\in\Delta_X\setminus\SPE_X}\delta^\perp,
				\qquad 
                \FE_X=\operatorname{Int}\overline{\BK_X}.  
			\]
            Here closure and interior are taken in $H^{1,1}(X,\R)$.
            
			\item The stabilizer of $\FE_X$ is
			\[
				\{u\in\Mon^2_{\mathrm{Hdg}}(X) \mid u(\FE_X)=\FE_X\}
				=\Mon^2_{\mathrm{Bir}}(X).
			\]	
			
		\end{enumerate}
	\end{lemma}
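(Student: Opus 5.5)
The statement just given, Lemma~\ref{lem:general-chambers}, is a compilation of known results. My plan is to assemble it from \cite{Markman} and \cite{AV}, using Theorem~\ref{thm:general-chambers} as the basic input.

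\textbf{Part (1).} Let $u\in\Mon^2_{\mathrm{Hdg}}(X)$.
\begin{itemize}
\item \emph{Invariance of $\Delta_X$.} The wall-divisor condition is stated for the whole orbit $\Mon^2_{\mathrm{Hdg}}(X)\cdot\delta$. Since $u$ preserves $q$, the Hodge structure and integral primitivity, it maps $\Delta_X$ to itself directly from the definition.
\item \emph{Invariance of $\SPE_X$.} By Markman's characterization \cite[Theorem~1.11, \S6]{Markman}, whether a primitive class $e$ with $q(e)<0$ is stably prime exceptional depends only on monodromy-invariant data of $e$: $q(e)$, $\divis(e)$, and the monodromy orbit of $(\Lambda,e)$. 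It also requires $e$ to be of type $(1,1)$. Hence $u$ preserves $\SPE_X$.
\item \emph{Chambers.} Since $u$ preserves $\Pos_X$ (it lies in $\Oth^+$ and is Hodge), it permutes the connected components of both complements \eqref{eq:exceptional-chambers} and \eqref{eq:kahlertype-chambers}.
\item \emph{Transitivity on exceptional chambers.} I use the reflections $R_E$ in prime exceptional divisors, which lie in $\Mon^2_{\mathrm{Hdg}}(X)$ by \cite[Theorem~1.6]{Markman}. Given an exceptional chamber $C$, join it to $\FE_X$ by a generic path that crosses finitely many walls $\delta^\perp$. Reflecting successively across those walls moves $C$ to $\FE_X$. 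This is the standard Weyl-group argument, as in \cite[Theorem~6.18]{Markman}.
\end{itemize}

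\textbf{Part (2).} A K\"ahler-type chamber contained in $\FE_X$ meets $\overline{\BK_X}$ in an open set. By \cite[Proposition~5.6 and Theorem~6.17]{Markman}, together with \cite{AV}, the set $\BK_X$ is a union of the cones $f^*\Kah_Y$, and these are open chambers of \eqref{eq:kahlertype-chambers}. Each one is a chamber because Theorem~\ref{thm:general-chambers}(2) applied to $Y$ identifies $\Kah_Y$ as a chamber, and $f^*$ carries $\Delta_Y$ to $\Delta_X$.

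It remains to show that every chamber inside $\FE_X$ is of this form. A chamber in $\FE_X$ not meeting $\BK_X$ would have to lie in the complement of $\BK_X$. That complement consists of boundary pieces cut out by walls in $\Delta_X\setminus\SPE_X$, which have empty interior. This yields the formula for $\BK_X$. Using $\FE_X=\operatorname{Int}\overline{\BK_X}$ from \cite[Theorem~6.17]{Markman}, translating by $u$ and applying part (1) shows that every K\"ahler-type chamber has the form $u(f^*\Kah_Y)$.

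\textbf{Part (3).}
\begin{itemize}
\item \emph{Birational classes stabilize $\FE_X$.} If $u=f^*$ for a bimeromorphic self-map $f$, then $f^*$ sends prime exceptional divisors to prime exceptional divisors, since $f$ is an isomorphism in codimension one. Hence $f^*$ preserves $\FE_X$.
\item \emph{Stabilizers are birational.} If $u(\FE_X)=\FE_X$, then $u$ maps $\Kah_X$ to some chamber inside $\FE_X$. By part (2) this chamber is $f^*\Kah_Y$ for some bimeromorphic $f\colon X\dashrightarrow Y$. Then $(f^*)^{-1}\circ u$ is a parallel-transport Hodge isometry $H^2(X,\Z)\to H^2(Y,\Z)$ that sends a K\"ahler class of $X$ to a K\"ahler class of $Y$. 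By Theorem~\ref{torelli}(2) it is induced by an isomorphism $g$. Therefore $u$ is induced by the bimeromorphic map obtained by composing $f$ with $g$, so $u\in\Mon^2_{\mathrm{Bir}}(X)$.
\end{itemize}

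The main obstacle is the step in part (2) showing that every chamber of \eqref{eq:kahlertype-chambers} inside $\FE_X$ actually meets $\BK_X$. This requires the Amerik--Verbitsky result that non-SPE walls contribute only "flop" walls inside $\FE_X$, and I would cite \cite{AV} for it rather than reprove it.
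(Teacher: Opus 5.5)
Your proposal is correct and follows essentially the paper's route. The paper assembles the lemma purely from citations to \cite{Markman} and \cite{AV}: \cite[Corollary~6.9]{Markman} for invariance, and \cite[Definition~5.10, Lemma~5.11, Proposition~5.6]{Markman} with \cite[Theorem~6.2]{AV} for the chamber statements. The parts you argue yourself are valid unpackings of those results, namely the Weyl-group transitivity in (1) and the derivation of (3) by applying Theorem~\ref{torelli}(2) to $(f^*)^{-1}\circ u$.

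The step you single out as the main obstacle in (2) needs no Amerik--Verbitsky flop-wall input. Once you cite $\FE_X\subset\overline{\BK_X}$, an open K\"ahler-type chamber contained in $\FE_X$ must meet the open set $\BK_X$, so it coincides with some $f^*\Kah_Y$. The formula for $\BK_X$ then follows directly.
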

	\begin{proof}
		The invariance in $\Delta_X$ follows from the definition of wall divisors and deformation invariance of SPE classes up to sign \cite[Corollary~6.9]{Markman}, \cite[Theorem~4.10]{AV}. The chamber descriptions in (1) and (2) follow from \cite[Definition~5.10 and Lemma~5.11]{Markman} and \cite[Theorem~6.2]{AV}.
		The remaining part in (2) is by \cite[Proposition~5.6]{Markman}. And statement (3) is \cite[Lemma~5.11(6)]{Markman},
	\end{proof}

\section{Proof of the main theorem}
	From now on, we consider a special hyperkähler, called OG6, originally discovered by O'Grady\cite{O}. It is not necessarily projective.
	Mongardi--Rapagnetta have already obtained all important results of OG6 that we needed.
	\begin{proposition}[{\cite[Theorem~1.4]{MR}}]\label{prop:mr-monodromy}
		For every OG6 manifold $X$, the degree-two monodromy group is maximal:
		\[
		\Mon^2(X)=\Oth^+\bigl(H^2(X,\Z)\bigr).
		\]
	\end{proposition}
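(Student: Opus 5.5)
This proposition is cited from Mongardi--Rapagnetta \cite[Theorem~1.4]{MR}. Our plan is to reconstruct their argument rather than reprove it from scratch. The inclusion $\Mon^2(X)\subset\Oth^+\bigl(H^2(X,\Z)\bigr)$ is already known from the chain of inclusions after Theorem~\ref{torelli}, so only the reverse inclusion needs proof. Since monodromy groups are conjugate under parallel transport, it suffices to prove it for one convenient OG6 manifold. We take O'Grady's model $\widetilde K_v(A)$, the symplectic resolution of the singular Albanese fibre of a moduli space of sheaves on an abelian surface $A$ with Mukai vector $v=2w$, $w^2=2$. For this model $H^2(X,\Z)\cong U^{\oplus 3}\oplus[-2]\oplus[-2]$, where the two $[-2]$ classes come from the exceptional divisor and the pullback of the locus of non-locally-free sheaves (Rapagnetta's computation).

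The first step is to produce monodromy operators. We would use three sources:
\begin{enumerate}
\item Monodromy of the family $(A,v)$ over the moduli of polarized abelian surfaces together with Mukai vectors. Via Yoshioka/Mukai-type equivariance, this realizes the image of $\Mon(A)$ acting on $v^\perp\cong U^{\oplus 3}$, which contains a large index-$\le 2$ subgroup of $\Oth^+(U^{\oplus 3})$ acting trivially on the $[-2]^{\oplus 2}$ part.
\item Reflections $R_\delta$ in classes of prime exceptional divisors. The exceptional divisor of the resolution has class of square $-4$ and divisibility $2$ (a sum of the two $[-2]$-type classes), and Markman's result shows that reflections in prime exceptional divisors are monodromy operators.
\item Automorphisms of $X$ induced by translations and by $-1$ on $A$, which act on the $[-2]$ summands by swaps or signs.
\end{enumerate}

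The second step is lattice-theoretic. We show that these elements generate $\Oth^+(\Lambda)$ for $\Lambda=U^{\oplus 3}\oplus[-2]^{\oplus 2}$. The discriminant group is $(\Z/2)^2$ with quadratic form values $-1/2,-1/2$, so $\Oth(A_\Lambda)$ is small; there is essentially only the swap. Since $\Lambda$ contains $U^{\oplus 2}$, Eichler's criterion applies: orbits of primitive vectors are determined by square and image in $A_\Lambda$. It also gives the exact sequence $1\to\widetilde{\Oth}^+(\Lambda)\to\Oth^+(\Lambda)\to\Oth(A_\Lambda)$ with surjective last map. We would show that $\widetilde{\Oth}^+(\Lambda)$ is generated by the stable orthogonal group of $U^{\oplus 3}$ (Eichler transvections) together with the conjugates of the $(-4,\text{div}\,2)$ reflection. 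The swap of the two $[-2]$ summands is obtained from the automorphisms in source (3) or from a reflection.

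The main obstacle is the first source: pinning down precisely which subgroup of $\Oth^+(U^{\oplus3})$ the abelian-surface monodromy realizes. This is delicate because the spinor norm and the determinant character for abelian surfaces differ from the K3 case. We would try first to use Markman's description of monodromy for generalized Kummers (Mongardi's results on $\Mon^2$ of Kummer type), transported to OG6 through the equivariant resolution. Then we would check that combining it with the exceptional reflections (which have spinor norm $+1$ for $q<0$ in the positive-normalized convention, and act nontrivially on discriminant) fills in the missing cosets.
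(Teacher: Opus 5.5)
The paper gives no argument for this statement; it simply quotes \cite[Theorem~1.4]{MR}. So the question is whether your sketch would prove it. As written it would not, because every operator you list preserves the orthogonal splitting $\Lambda=U^{\oplus3}\oplus\langle-2\rangle^{\oplus2}$. By your own description, source (1) acts on $H^2(A)\cong U^{\oplus3}$ and trivially on the two $\langle-2\rangle$ summands. Your reflection and the automorphisms in (3) act only on $\langle-2\rangle^{\oplus2}$. Hence the group they generate lies in $\Oth(U^{\oplus3})\times\Oth(\langle-2\rangle^{\oplus2})$. This has infinite index in $\Oth^+(\Lambda)$: an Eichler transvection $E_{f,a}$ with $f\in U^{\oplus3}$ isotropic and $0\ne a\in\langle-2\rangle^{\oplus2}$ already breaks the splitting. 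Conjugating your $(-4,2)$ reflection by elements of this group never leaves it. Conjugating by arbitrary elements of $\widetilde{\Oth}^+(\Lambda)$ presupposes what you want to prove. So the lattice-theoretic second step cannot close the argument.

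The missing input is monodromy that mixes $H^2(A)$ with the negative classes. Note that $v^\perp=w^\perp=H^2(A)\oplus\Z(1,0,1)\cong U^{\oplus3}\oplus\langle-2\rangle$ has rank $7$, not $6$. You must show that the monodromy of the moduli spaces acts on all of $v^\perp$ through a group containing, say, the transvections $E_{f,a}$ with $a\in v^\perp$. This has to include Fourier--Mukai and tensorization isomorphisms between moduli spaces, not only deformations of $A$, since those fix $(1,0,1)$. The generalized Kummer results you want to import concern primitive Mukai vectors and do not transfer formally to $v=2w$. This is exactly the obstacle you flagged, and it cannot be bypassed.

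There is also a misidentification in (2) and (3). The divisor $\widetilde\Sigma$ is contracted by $\widetilde K_v(A)\to K_v(A)$, so it is orthogonal to $M:=\tilde\mu(v^\perp)$. Since $|A_M|=2$, a discriminant count gives $\Lambda=M\oplus\Z\eta$ with $q(\eta)=-2$ and $\divis(\eta)=2$, and $[\widetilde\Sigma]$ is a positive multiple of $\eta$. So $\widetilde\Sigma$ is not of type $(-4,2)$. Moreover $R_{\widetilde\Sigma}=s_\eta$ acts trivially on $A_\Lambda=\langle\eta/2\rangle\oplus\langle\beta/2\rangle$, where $\beta=\tilde\mu(1,0,1)$. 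The automorphisms induced from $A$ preserve both $\Sigma$ and the non-locally-free locus $B$, so they fix these classes and produce no swaps or sign changes.

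The $(-4,2)$ prime exceptional divisor is instead the strict transform $\widetilde B$. The polystable points $I_p\otimes L\oplus I_q\otimes L'$ lie in $B$, so $\widetilde B$ meets the generic fibre of $\widetilde\Sigma\to\Sigma$, and therefore $q(\widetilde B,\eta)>0$. Its class has no $\tilde\mu H^2(A)$-component, since it is independent of the polarization. Markman's integrality condition $|q(e)|\in\{\divis(e),2\divis(e)\}$ for its primitive class $e$ then forces $[\widetilde B]=\pm\beta-\eta$. The reflection $R_{\widetilde B}$ exchanges $\eta$ and $\pm\beta$ and realizes the nontrivial element of $\Oth(A_\Lambda)$. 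It also conjugates moduli-space transvections $E_{f,\beta}$ into $E_{f,\pm\eta}$.

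With the enlarged moduli monodromy and $R_{\widetilde B}$ in hand, the argument closes. The Eichler transvections generate $\widetilde{\mathrm{SO}}^+(\Lambda)$ because $\Lambda\supset U^{\oplus2}$. Then $s_\eta$ and $R_{\widetilde B}$ supply the remaining cosets of $\Oth^+(\Lambda)$.
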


	\begin{remark}
		The maximal monodromy phenomenon also occurs in the following known types. For $K3^{[n]}$-type, it holds for $n=1$, i.e., K3 surface, $n=2$ or $n-1$ is a prime power \cite[Lemma~9.2]{Markman}. It also holds for OG10 by \cite[Theorem~5.4]{On}. For generalized Kummer type of dimension $2n\ge4$, it always fails, see \cite[Theorem~1.4]{MaK}. By the way, this is also exactly the list for which the classical global bimeromorphic Torelli holds, since the maximal monodromy phenomenon allows us to remove the parallel transport condition in Theorem~\ref{torelli}.
	\end{remark}

    The wall divisors on OG6 are surprisingly simple. The following proposition gives a full classification.
    \begin{proposition}[{\cite[Proposition~6.8]{MR}}]\label{prop:mr-walls}
		Let $X$ be an OG6 manifold and let $\delta\in H^{1,1}(X,\Z)$ be primitive, then $\delta$ is a wall divisor if and only if
		\[
			(q(\delta),\divis(\delta)) \in\{(-4,2),(-2,2),(-2,1)\}.
		\]
		In particular, some nonzero integral multiple of $\delta$ is the class of a stably prime-exceptional divisor if and only if its numerical type is $(-4,2)$ or $(-2,2)$.
	\end{proposition}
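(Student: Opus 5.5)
The plan is to reduce the classification to a finite lattice-theoretic problem using deformation invariance and maximal monodromy, and then to settle each resulting case on an explicit model. The first step is the invariance statement of Mongardi and Amerik--Verbitsky: whether $\delta$ is a wall divisor depends only on the parallel-transport orbit of the pair $(X,\delta)$ among pairs with $\delta$ of type $(1,1)$. By Proposition~\ref{prop:mr-monodromy}, $\Mon^2(X)=\Oth^+(\Lambda)$, where
\[
\Lambda\cong U^{\oplus 3}\oplus\langle -2\rangle^{\oplus 2}.
\]
Since $\Lambda$ contains two copies of $U$, Eichler's criterion applies. It says that the $\Oth^+(\Lambda)$-orbit of a primitive $\delta$ is determined by $q(\delta)$, $\divis(\delta)$ and the class $\delta/\divis(\delta)$ in the discriminant group $A_\Lambda\cong(\Z/2)^2$. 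Since $\divis(\delta)\mid 2$, the classification therefore reduces to finitely many discriminant types for each negative square. The remaining problem is to decide, for each square and each type, whether $\delta$ is a wall divisor.

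For the \emph{necessity} direction, the plan is to use the model $X=\widetilde{K}_v(A)$ of O'Grady and Lehn--Sorger. Here $A$ is an abelian surface, $v=2v_0$ is a Mukai vector with $v_0^2=2$, and $K_v(A)$ is the Albanese fibre of $M_v(A)$. Perego--Rapagnetta identify $H^2(X,\Z)$ with $v^\perp\subset \widetilde H(A,\Z)$ together with one extra class $\tilde\sigma/2$ coming from the exceptional divisor of the resolution. Walls of the movable and ample cones are then governed by Bridgeland wall-crossing, in the manner of Bayer--Macrì, applied to the singular moduli space and its resolution. Concretely, a wall is produced by a class $s$ in the algebraic Mukai lattice that satisfies $s^2\ge -2$ and $(v-s)^2\ge -2$, possibly together with the exceptional class. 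Projecting such an $s$ to $v^\perp$ and saturating gives an explicit finite list of possible $(q(\delta),\divis(\delta))$. The main obstacle is to show that this list is exactly $\{(-4,2),(-2,2),(-2,1)\}$. This will require a bound on $q(\delta)$, and that bound is the delicate part, because the Bayer--Macrì theory must be transported across the symplectic resolution. If the direct transport fails, I would instead argue numerically through the contraction: use the Fujiki relation together with the known cohomology of possible contracted loci, such as $\mathbb P^3$ or conic fibrations over surfaces, to bound $q(\delta)$.

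For \emph{sufficiency}, I would exhibit one geometric realization of each orbit, which is enough by the invariance from the first step. For $(-4,2)$, the resolution divisor $\widetilde\Sigma$ has class $2\delta$ and is prime exceptional, since it is contracted by $X\to K_v(A)$. For $(-2,2)$, the divisor $\widetilde B$ of non-locally-free (or non-reduced-support) sheaves gives an exceptional divisor with class $2\delta$; stable primeness follows once it persists in a general deformation of the pair. For $(-2,1)$, I would realize a Mukai flop of a Lagrangian $\mathbb P^3$ on a suitable $\widetilde K_v(A)$, and check that $\delta$ is not SPE because no multiple of $\delta$ is effective on a very general deformation.

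The final clause of the statement then follows from the same models. The first two types are realized by prime exceptional divisors. Conversely, the flop class in $(-2,1)$ is a wall of $\BK_X$ internal to $\FE_X$, so by Lemma~\ref{lem:general-chambers}(2) it lies in $\Delta_X\setminus\SPE_X$.
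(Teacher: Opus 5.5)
The paper gives no argument for this statement; it is quoted from \cite[Proposition~6.8]{MR}. Your sketch therefore has to stand on its own, and it does not.

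Your first step is sound. Deformation invariance of wall divisors, together with $\Mon^2(X)=\Oth^+(\Lambda)$ and Eichler's criterion, shows that being a wall divisor depends only on $(q(\delta),\divis(\delta))$. (For divisibility $2$, $q(\delta)\bmod 8$ determines $\delta/2\in A_\Lambda$ up to the swap of the two $\langle-2\rangle$ summands, and that swap lies in $\Oth^+(\Lambda)$.) But this still leaves infinitely many admissible types, e.g.\ $(-4,1)$, $(-6,1)$, $(-10,2)$, $(-12,2),\dots$. The \emph{only if} direction is exactly the claim that none of these is a wall divisor, and you leave it open as ``the main obstacle''.

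The tool you propose does not supply it as stated:
\begin{itemize}
\item The conditions $s^2\ge-2$, $(v-s)^2\ge-2$ are the Bayer--Macr\`i criterion for primitive Mukai vectors on a K3 surface, where $-2$ reflects spherical objects. On an abelian surface there are no spherical objects, so this is not even the right numerical condition there.
\item Extending that theory to the Albanese fibre of the singular space $M_{2v_0}(A)$ and across its symplectic resolution is itself the content to be proved, not something you can quote.
\item Your fallback, bounding $q(\delta)$ through the Fujiki relation and the ``possible contracted loci'', presupposes a classification of the exceptional loci of extremal contractions on OG6 manifolds. That classification is essentially equivalent to the statement.
\end{itemize}
So necessity is not established.

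The sufficiency half also has concrete problems.
\begin{itemize}
\item Take the decomposition you quote, $H^2(X,\Z)=\widetilde\nu(v^\perp)\oplus\Z\,\widetilde\Sigma/2$ with $v^\perp\cong U^{\oplus3}\oplus\langle-2\rangle$. Matching the discriminant of $U^{\oplus3}\oplus\langle-2\rangle^{\oplus2}$ forces $q(\widetilde\Sigma/2)=-2$, and then $\divis(\widetilde\Sigma/2)=2$. So the resolution divisor realizes $(-2,2)$, not $(-4,2)$. You still need a prime exceptional divisor whose primitive class has type $(-4,2)$. That requires an actual computation, for instance of the class of $\widetilde B$ in this decomposition, not an assignment by hope.
\item For $(-2,1)$ nothing is constructed. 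You must exhibit an OG6 manifold with a small (flopping) contraction, identify the contracted locus, and check that the primitive class dual to the contracted curve has type $(-2,1)$. ``A Mukai flop of a Lagrangian $\mathbb P^3$ on a suitable $\widetilde K_v(A)$'' does none of this.
\item The final clause, that $(-2,1)$ classes are not SPE, rests entirely on that flop, so it is unproved as well.
\end{itemize}

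In short, you have a correct reduction to $(q,\divis)$ followed by an outline, but the geometric input on both sides is missing.
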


    The key observation is: these two results imply that all wall reflections of an OG6 manifold, although purely arithmetic in construction, can be lifted to Hodge monodromy operators.
	\begin{lemma}\label{lem:reflection}
		For every $\delta\in\Delta_X$, there is a reflection with respect to the wall $\delta^\perp$,
		\[
		s_\delta(x)=x-\frac{2q(x,\delta)}{q(\delta)}\delta.
		\]
		\begin{enumerate}
			\item $s_\delta \in \Mon^2_{\mathrm{Hdg}}(X)$, and it preserves $\Delta_X$, and their numerical types.
			\item If two chambers are adjacent along $\delta^\perp$, then $s_\delta$ exchanges them.
			\item $s_\delta\in\Mon^2_{\mathrm{Bir}}(X)$ if and only if $\delta^\perp\cap\FE_X\ne\varnothing$.
		\end{enumerate}
	\end{lemma}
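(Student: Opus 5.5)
The plan is to check integrality first, then use \Cref{prop:mr-monodromy} to get membership in the monodromy group, and finally read off the chamber statements from \Cref{thm:general-chambers} and \Cref{lem:general-chambers}.

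\textbf{Integrality and isometry.} By \Cref{prop:mr-walls}, every $\delta\in\Delta_X$ has numerical type $(q(\delta),\divis(\delta))\in\{(-4,2),(-2,2),(-2,1)\}$. In each case $q(\delta)/2$ divides $\divis(\delta)$, so $2q(x,\delta)/q(\delta)\in\Z$ for all $x\in\Lambda$. Hence $s_\delta$ is an integral isometry of $\Lambda$ fixing $\delta^\perp$ and sending $\delta\mapsto-\delta$.

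\textbf{Part (1).} Since $q(\delta)<0$, the reflection $s_\delta$ acts as the identity on any maximal positive definite subspace chosen inside $\delta^\perp_\R$. It therefore preserves the orientation of the positive cone, so $s_\delta\in\Oth^+(\Lambda)$, and $\Oth^+(\Lambda)=\Mon^2(X)$ by \Cref{prop:mr-monodromy}.

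Because $\delta$ is of type $(1,1)$, it is orthogonal to $\sigma_X$. Thus $s_\delta$ fixes $H^{2,0}$ and $H^{0,2}$, and being real it preserves $H^{1,1}$. This gives $s_\delta\in\Mon^2_{\mathrm{Hdg}}(X)$. By \Cref{lem:general-chambers}(1), $s_\delta$ then preserves $\Delta_X$ (and $\SPE_X$). Since $s_\delta$ is an isometry of $\Lambda$, it preserves both square and divisibility, so numerical types are preserved.

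\textbf{Part (2).} Let $C_1,C_2$ be chambers adjacent along $\delta^\perp$, meaning they share a codimension-one face $F\subset\delta^\perp$. Since $s_\delta$ permutes chambers and fixes $\delta^\perp$ pointwise, it fixes $F$ pointwise. It also interchanges the two half-spaces $q(\cdot,\delta)>0$ and $q(\cdot,\delta)<0$.

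Take a generic point $p$ in the relative interior of $F$, lying on no other wall. The wall set is locally finite in $\Pos_X$, as for MBM classes by \cite{AV}, so a small ball around $p$ meets only $\delta^\perp$ and hence only the chambers $C_1$ and $C_2$. The reflection $s_\delta$ fixes $p$ and swaps the two sides of this ball. Consequently $s_\delta(C_1)$ is a chamber meeting $C_2$, so $s_\delta(C_1)=C_2$.

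\textbf{Part (3).} By \Cref{lem:general-chambers}(3), $s_\delta\in\Mon^2_{\mathrm{Bir}}(X)$ if and only if $s_\delta(\FE_X)=\FE_X$.

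\emph{Wall meets $\FE_X$.} Suppose $\delta^\perp\cap\FE_X\neq\varnothing$. By \Cref{thm:general-chambers}(1), $\FE_X$ is a connected component of the complement of the walls $\delta'^\perp$ with $\delta'\in\SPE_X$. Therefore $\delta\notin\SPE_X$, since otherwise $\delta^\perp$ would be disjoint from $\FE_X$. Pick $x\in\delta^\perp\cap\FE_X$; it is fixed by $s_\delta$. Then $s_\delta(\FE_X)$ is an exceptional chamber, because $s_\delta$ preserves $\SPE_X$, and it contains $x$. Exceptional chambers are disjoint, so $s_\delta(\FE_X)=\FE_X$.

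\emph{Stabilizes $\FE_X$.} Conversely, suppose $s_\delta(\FE_X)=\FE_X$. Take $y\in\FE_X$. The point $y+s_\delta(y)$ lies in $\FE_X$, since $\FE_X$ is convex, being cut out by linear inequalities in the positive cone. It also lies in $\delta^\perp$, because $q(y+s_\delta y,\delta)=q(y,\delta)-q(y,\delta)=0$.

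\textbf{Main obstacle.} The delicate point is part (2). It needs local finiteness of walls and a precise meaning of ``adjacent'', namely a common codimension-one face inside $\delta^\perp$ not contained in any other wall. I would cite \cite[Theorem~6.2]{AV} for this. Everything else is formal once \Cref{prop:mr-monodromy} and \Cref{prop:mr-walls} are combined.
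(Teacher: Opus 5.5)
Your proposal is correct and follows the paper's proof essentially step by step: integrality from the three numerical types, membership in $\Oth^+(\Lambda)$ via a positive three-plane inside $\delta^\perp$, the local reflection argument at a generic point of the common wall for (2), and the stabilizer-plus-convexity argument for (3). One small point to tighten: in (2) the neighbourhood of $p$ should be chosen $s_\delta$-invariant, as the paper does. A ball for a Euclidean metric in which $s_\delta$ is orthogonal works; alternatively, argue by continuity of $s_\delta$ at the fixed point $p$.
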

	
	\begin{proof}
		The reflection preserves $q$, and its formula gives
		\[
		s_\delta\text{ is integral}
		\quad\Longleftrightarrow\quad
		q(\delta)\mid 2\divis(\delta).
		\]
		All three numerical types in Proposition~\ref{prop:mr-walls} satisfy this condition.
		
		Choose $0\ne\sigma\in H^{2,0}(X)$ and $p\in\delta^\perp\cap\Pos_X$. As $\delta$ is of type $(1,1)$, the reflection fixes $H^{2,0}(X) = \mathbb{C}\sigma$ and is a Hodge isometry. Further, the reflection fixes the positive three-plane $\langle\Re\sigma,\Im\sigma,p\rangle_{\R}$, so it belongs to $\Oth^+(\Lambda)$.
		Therefore, Proposition~\ref{prop:mr-monodromy}  gives $s_\delta \in \Mon^2_{\mathrm{Hdg}}(X)$.
		
		An integral Hodge isometry preserves primitivity, square and
		divisibility.Consequently $s_\delta$ permutes $\Delta_X$ and wall arrangement, and hence proves (1).
		
		For (2), choose a point in the relative interior of the common wall that lies on no other wall. By local finiteness, a sufficiently small $s_\delta$-invariant neighbourhood meets no other wall.
		The reflection exchanges its two sides. Since it also permutes the chambers, it exchanges the two adjacent chambers.
		
		For (3), suppose that $p\in\delta^\perp\cap\FE_X$. Since $s_\delta(p)=p$, the exceptional chambers $\FE_X$ and $s_\delta(\FE_X)$ intersect. By Lemma~\ref{lem:general-chambers}, $s_\delta(\FE_X)=\FE_X$ and then $s_\delta\in\Mon^2_{\mathrm{Bir}}(X)$.
		Conversely, such a reflection preserves $\FE_X$. For any $x\in\FE_X$, convexity gives $(x+s_\delta (x))/2\in\FE_X\cap\delta^\perp$.
	\end{proof}

	\begin{remark}
	By \cite[Proposition~6.2]{Markman}, it is known that wall reflections given by $\SPE_X$ are automatically integral, we even know that $s_\delta \in \Mon^2_{\mathrm{Hdg}}(X)\setminus\Mon^2_{\mathrm{Bir}}(X)$, if $\delta \in \SPE_X$. However, not all wall divisors on general hyperk\"ahler manifolds could give integral reflections.
	For example, the four numerical types of wall divisors on OG10 are as follows \cite[Propositions~3.1 and 5.4]{MO}:
    \[
    \begin{array}{c|c|c}
        (q(\delta),\divis(\delta))
        &\text{SPE multiple}
        & s_\delta\text{ integral}\\[3pt]\hline
        (-2,1)&\text{yes}&\text{yes}\\
        (-6,3)&\text{yes}&\text{yes}\\
        (-4,1)&\text{no}&\text{no}\\
        (-24,3)&\text{no}&\text{no}
    \end{array}
    \]
	\end{remark}

	Now we begin the proof of the main theorem.
	\begin{proof}[Proof of Theorem~\ref{thm:main}]
        If there is an integral Hodge isometry between OG6 manifolds $X$ and $Y$, then the classical bimeromorphic Torelli of OG6 \cite[Theorem~1.1]{MR} provides a bimeromorphic map $f:X\dashrightarrow Y$. The induced map
		\[
		    f^*:H^2(Y,\Z)\xrightarrow{\sim} H^2(X,\Z)
		\]
		is an integral Hodge isometry and a parallel transport operator \cite[Theorem~3.1]{Markman}.
		Proposition~\ref{prop:mr-walls}, applied also to $Y$, implies that $f^*\Kah_Y$ is a chamber of \ref{eq:kahlertype-chambers}.
		
		Both $f^*\Kah_Y$ and $\Kah_X$ lie in $\BK_X\subset\FE_X$. By standard facts about locally finite hyperplane arrangements in an open convex cone, one can choose a finite sequence of chambers
		\[
		C_0=f^*\Kah_Y,\quad C_1,\ldots,C_m=\Kah_X,
		\]
		whose successive wall crossings lie in $\FE_X$. Write $\delta_i^\perp$ for the common wall of $C_{i-1}$ and $C_i$.
		
		By Lemma~\ref{lem:reflection}, each $s_{\delta_i}$ belongs to $\Mon^2_{\mathrm{Bir}}(X)$ and maps $C_{i-1}$ to $C_i$. Set $w_0=1$ and $w_i=s_{\delta_i}\circ w_{i-1}$.
		Inductively,
		\[
		w_i(C_0)=s_{\delta_i}\bigl(w_{i-1}(C_0)\bigr) =s_{\delta_i}(C_{i-1})=C_i.
		\]
		Consequently,
		\[
		w=s_{\delta_m}\circ\cdots\circ s_{\delta_1} \in\Mon^2_{\mathrm{Bir}}(X),\qquad w(f^*\Kah_Y)=\Kah_X.
		\]
		
		Parallel transport operators are closed under composition and inversion. Hence 
		\[
		    w\circ f^*: H^2(Y,\Z) \xrightarrow{\sim} H^2(X,\Z) 
		\]
		is a Hodge parallel transport isometry sending a K\"ahler class of $Y$ to a K\"ahler class of $X$. Apply Theorem~\ref{torelli}, it gives a biholomorphism $h:X\xrightarrow{\sim} Y$. 
	\end{proof}

 As an immediate consequence, we get 
\begin{corollary}
Let $X$ and $Y$ be two OG6 manifolds. Then $X$ and $Y$ are birational if and only if $X\cong Y$. 
	\end{corollary}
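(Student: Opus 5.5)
The plan is to prove the two directions separately; the whole argument reduces to Theorem~\ref{thm:main} together with the bimeromorphic Torelli theorem. One direction is immediate. If $X\cong Y$, then in particular $X$ and $Y$ are birational, since a biholomorphism is a bimeromorphic map.

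For the converse, suppose $f\colon X\dashrightarrow Y$ is bimeromorphic; in the projective setting, birational. The first step is to produce an integral Hodge isometry of second cohomology. By Huybrechts's result, the pullback $f^*\colon H^2(Y,\Z)\to H^2(X,\Z)$ is well defined, because $f$ is an isomorphism away from codimension-two subsets. It is an integral Hodge isometry for the BBF forms, and it is moreover a parallel transport operator (\cite[Theorem~3.1]{Markman}, as already used in the proof of Theorem~\ref{thm:main}). Only the Hodge isometry property is needed here.

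The second step is to feed this isometry into Theorem~\ref{thm:main}. Since $f^*$ (or its inverse) is an integral Hodge isometry $H^2(X,\Z)\cong H^2(Y,\Z)$, the theorem yields that $X$ and $Y$ are biholomorphic. When $X$ and $Y$ are projective, GAGA upgrades the biholomorphism to an isomorphism of algebraic varieties.

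There is no real obstacle; the only point needing care is to confirm that the pullback under a bimeromorphic map is genuinely a Hodge isometry on integral second cohomology. This is exactly the cited result of Huybrechts and Markman, and it holds without any projectivity hypothesis. All of the chamber-theoretic work has already been done in the proof of Theorem~\ref{thm:main}.
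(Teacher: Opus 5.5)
Your proposal is correct and matches the paper, which states the corollary as an immediate consequence of Theorem~\ref{thm:main}: the pullback $f^*$ of a bimeromorphic map is an integral Hodge isometry (indeed a parallel transport operator by \cite[Theorem~3.1]{Markman}), Theorem~\ref{thm:main} then yields a biholomorphism, and the converse direction is trivial. The proof of Theorem~\ref{thm:main} itself begins with such a bimeromorphic map $f$, so the corollary can also be read off directly from that chamber argument.
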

\printbibliography
\end{document}